\documentclass[12pt,american]{article}

\usepackage{amsmath,amssymb,amsfonts}
\usepackage{amsthm}
\usepackage{booktabs}
\usepackage{url}
\usepackage{tikz}
\usepackage{pgfplots}
\usepackage{natbib}
\pgfplotsset{compat=1.18}

\usepackage{geometry}
\newtheorem{theorem}{Theorem}[section]
\newtheorem{lemma}[theorem]{Lemma}
\newtheorem{remark}[theorem]{Remark}

\newcommand{\R}{\mathbb{R}}
\newcommand{\E}{\mathbb{E}}
\newcommand{\Pp}{\mathbb{P}}
\newcommand{\cF}{\mathcal{F}}
\newcommand{\cB}{\mathcal{B}}
\newcommand{\cL}{\mathcal{L}}
\newcommand{\crit}{\operatorname{crit}}
\newcommand{\Acc}{\operatorname{Acc}}

\begin{document}

\title{A Mini-Batch Counterexample to Last-Iterate Convergence in Definable Optimization}
\author{Weiwei Kong}

\maketitle

\abstract{We give a counterexample to the convergence conjecture in Remark 12 of [Bolte \& Pauwels, 2021] for mini-batch stochastic approximation with definable potentials. The construction uses two convex piecewise-affine, hence semialgebraic, summands on $\mathbb{R}$. We choose a deterministic nonincreasing block stepsize sequence satisfying $\alpha_k = o(1/\log k)$ and an admissible minimum-norm selection from each aggregate batch field. On successive blocks, the iterates form lazy reflected random walks on nested dyadic lattices. An explicit endpoint-cover-time estimate, Markov’s inequality, and the first Borel-Cantelli lemma imply that almost surely every sufficiently late block's iterates visit their entire lattice. Consequently, the iterates remain in $[-1,1]$ but do not converge, and their accumulation set is exactly $[-1,1]$, on which the averaged objective is constant. Finally, the construction has $\sum_k \alpha_k^2 =\infty$. Both Chat-GPT 5.6 (Sol) and Gemini Pro 3.1 (DeepThink) were used in the development and drafting of this result.}
\section{Introduction}\label{sec:introduction}

Bolte and Pauwels \cite{BoltePauwels2021} study the finite-sum minimization problem
\begin{equation}\label{eq:objective}
    \min_{w\in\mathbb{R}^d} \left[J(w):=\frac1n \sum_{i=1}^n f_i(w)\right]
\end{equation}
and consider the iterates $\{w_k\}$ of the mini-batch stochastic approximation algorithm (SAA) generated by
\begin{equation}\label{eq:BP-recursion}
    w_{k+1}=w_k-\alpha_k d_k,
    \qquad
    d_k\in \frac1{|B_k|}\sum_{i\in B_k}D_i(w_k),
\end{equation}
where each $B_k\subset\{1,\dots,n\}$ is an independent uniformly sampled nonempty mini-batch, $f_i:\mathbb{R}^d \to \mathbb{R}$ is a Lipschitz-definable potential, and $D_i$ is a definable conservative field for $f_i$. Under boundedness and the stepsize condition
\begin{equation}\label{eq:BP-step}
    \alpha_k=o(1/\log k),
\end{equation}
they prove that the accumulation set is nonempty, critical, and contained in one level set of $J$. Remark~12 of \cite{BoltePauwels2021} conjectures convergence of the full sequence.

We give a one-dimensional counterexample consisting of two convex semialgebraic summands whose averaged objective is a deterministic hinge function with a flat critical interval. The stochasticity comes from sampling the finite-sum components, not from the averaged objective itself: in the interior of the plateau, the two component fields are $\{1\}$ and $\{-1\}$ and cancel only after averaging. 
A blockwise choice of stepsizes turns \eqref{eq:BP-recursion} into reflected random walks on finer and finer lattices. Almost surely, every sufficiently late block visits both endpoints and hence every point of its lattice. Since the lattice mesh tends to zero, the entire critical interval is the accumulation set.

Pathological bounded subgradient dynamics were already known for general Lipschitz objectives \cite{DaniilidisDrusvyatskiy2020}. R\'ios-Zertuche later showed that bounded deterministic subgradient sequences may fail to converge for a Whitney-stratifiable objective satisfying a Kurdyka--\L{}ojasiewicz inequality, and developed further pathologies for path-differentiable objectives \cite{RiosZertuche2022}. Subsequent work describes the structure of oscillations in deterministic and stochastic proximal subgradient schemes \cite{BoltePauwelsRios2024,Schechtman2023}. To our knowledge, these results do not furnish an explicit instance of the independent uniform finite-sum mini-batch recursion \eqref{eq:BP-recursion} with definable component potentials. Our contribution is such an instance, using two convex semialgebraic summands in one dimension.

\vspace*{1em}
\noindent\textit{Notation}. For a locally Lipschitz function $f:{\mathbb{R}}^d \to \mathbb{R}$, denote by
\[
\partial_C f(x) = \left\{ \xi \in \mathbb{R}^d : \langle \xi, v \rangle \le \limsup_{\substack{y \to x \\ t \downarrow 0}} \frac{f(y + tv) - f(y)}{t} \text{ for all } v \in \mathbb{R}^d \right\}
\]
its Clarke subdifferential. When $f$ is convex, this is the usual convex subdifferential and is a conservative field \cite{Clarke1990,BoltePauwels2021}. For any sequence $\{x_k\}\subseteq\R$, denote by
\begin{equation}\label{eq:acc-definition}
    \Acc(\{x_k\})
    :=\bigl\{x\in\R:\text{there exists }k_j\uparrow\infty
    \text{ such that }x_{k_j}\to x\bigr\}
\end{equation}
its set of accumulation points. For any nonempty closed convex set $S\subseteq\mathbb{R}^d$, denote by
\[
\operatorname{proj}_S(x)
:=\operatorname*{argmin}_{y\in S}\|x-y\|
\]
the (unique) Euclidean projection of $x$ onto $S$. For a set-valued field \(D:\mathbb{R}^p\rightrightarrows\mathbb{R}^p\), define its critical set by
\[
\crit(D):=\{x\in\mathbb{R}^p:0\in D(x)\}.
\]
When \(D_f\) is the conservative field associated with a function \(f\), we write
\[
\crit_f:=\crit(D_f).
\]

\vspace*{1em}
\noindent\textit{Organization}. Section~\ref{sec:construction} constructs the potentials, batch selection method, and states the main claim. Section~\ref{sec:proof} gives the proof of the main claim. Section~\ref{sec:discussion} gives a few concluding remarks.

\vspace*{1em}
\noindent\textit{AI Usage}. Both Chat-GPT 5.6 (Sol) and Gemini Pro 3.1 (DeepThink) were used in the development and drafting of this result.

\section{Construction and main result}\label{sec:construction}

This section defines the functions of interest, the sampling scheme and direction selector, and states the main result.

Define the key functions
\begin{equation}\label{eq:phi-fi}
    \phi(x):=\max\{|x|-1,0\},
    \qquad
    f_1(x):=\phi(x)+x,
    \qquad
    f_2(x):=\phi(x)-x,
\end{equation}
and set $D_i=\partial_C f_i$ for $i\in\{1,2\}$. The above functions are convex, piecewise-affine, and semialgebraic (hence, locally Lipschitz and definable). On the interval relevant to the construction, their subdifferentials $D_i(\cdot)$ are
\begin{equation}\label{eq:component-fields}
\begin{array}{c|ccc}
 & x=-1 & \quad-1<x<1\quad & x=1\\ \hline
D_1(x) & [0,1] & \{1\} & [1,2]\\
D_2(x) & [-2,-1] & \{-1\} & [-1,0]
\end{array}
\end{equation}
The average objective is
\begin{equation}\label{eq:Jphi}
    J(x)=\frac{f_1(x)+f_2(x)}2=\phi(x).
\end{equation}
\begin{figure}[t]
\centering
\begin{tikzpicture}
\begin{axis}[
    width=0.80\textwidth,
    height=5.8cm,
    axis lines=middle,
    xlabel={$x$},
    xmin=-2.4, xmax=2.4,
    ymin=-2.4, ymax=3.4,
    domain=-2.4:2.4,
    samples=241,
    xtick={-2,-1,0,1,2},
    ytick={-2,-1,0,1,2,3},
    legend style={at={(0.98,0.98)},anchor=north east,draw=none,fill=none},
    clip=false
]
\addplot[very thick, black] {max(abs(x)-1,0)};
\addlegendentry{$J(x)=(|x|-1)_+$}
\addplot[thick, densely dashed] {max(abs(x)-1,0)+x};
\addlegendentry{$f_1(x)=J(x)+x$}
\addplot[thick, dashdotted] {max(abs(x)-1,0)-x};
\addlegendentry{$f_2(x)=J(x)-x$}
\draw[densely dotted] (axis cs:-1,0) -- (axis cs:-1,3.2);
\draw[densely dotted] (axis cs:1,0) -- (axis cs:1,3.2);
\node[above] at (axis cs:0,0.15) {\footnotesize critical plateau};
\end{axis}
\end{tikzpicture}
\caption{The deterministic objective $J$ is flat on $[-1,1]$, while its two component functions have opposite slopes there.}
\label{fig:hinge}
\end{figure}
Figure~\ref{fig:hinge} displays the functions $\phi$, $f_1$, and $f_2$.

Because the values in \eqref{eq:component-fields} are intervals, their Minkowski sum is already convex, and the averaged field of Bolte and Pauwels becomes
\begin{equation}\label{eq:DJ-explicit}
D_J(x)=\frac{D_1(x)+D_2(x)}{2}=\partial_C\phi(x)
=\begin{cases}
\{-1\},&x<-1,\\
[-1,0],&x=-1,\\
\{0\},&-1<x<1,\\
[0,1],&x=1,\\
\{1\},&x>1,
\end{cases}
\end{equation}
and, consequently,
\begin{equation}\label{eq:critinterval}
    \crit_J=[-1,1],
    \qquad
    J|_{[-1,1]}\equiv0.
\end{equation}

We now define the mini-batch sampling scheme and direction selector. Let the set of possible mini-batches be
\[
    \cB:=\bigl\{\{1\},\{2\},\{1,2\}\bigr\},
\]
and let the selected batches $\{B_k\}$ be independent and uniform on $\cB$. For $B\in\cB$, define
\begin{equation}\label{eq:selector}
    G_B(x)=\frac1{|B|}\sum_{i\in B}D_i(x),
    \qquad
    g_B(x)=\operatorname{proj}_{G_B(x)}(0),
    \qquad
    d_k=g_{B_k}(w_k),
\end{equation}
where $g_{B_k}(w_k)$ is the selected direction at iteration $k$.

Note that each $G_B(x)$ is a nonempty compact interval, so the projection in \eqref{eq:selector} is uniquely defined. If $G_B(x)=[\ell_B(x),u_B(x)]$, then
\[
    g_B(x)=\max\{\ell_B(x),\min\{0,u_B(x)\}\}.
\]
The endpoint functions $\ell_B,u_B$ are piecewise affine, so $g_B$ is single-valued, semialgebraic, and therefore measurable. By construction $g_B(x)\in G_B(x)$, so the oracle is admissible in \eqref{eq:BP-recursion}. Moreover, the selector is made from the aggregate batch field $G_B$, exactly as
permitted by the inclusion \eqref{eq:BP-recursion}; it need not arise
from batch-independent single-valued selections of the individual
fields $D_i$.

We are now ready to state the main claim.

\begin{theorem}\label{thm:main}
There is a deterministic nonincreasing positive sequence $\{\alpha_k\}$ satisfying \eqref{eq:BP-step} such that, for $w_0=0$, the recursion \eqref{eq:BP-recursion} with the selector \eqref{eq:selector} satisfies
\[
    |w_k|\le1\quad\text{for all }k,
    \qquad
    \Acc(\{w_k\})=[-1,1]=\crit_J\quad\text{almost surely}.
\]
Moreover, the iterates $\{w_k\}$ do not converge almost surely, even though $J$ is constant on their accumulation set.
\end{theorem}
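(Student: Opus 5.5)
We first compute the selector $g_B$ on $[-1,1]$ from \eqref{eq:component-fields} and \eqref{eq:selector}. For $B=\{1,2\}$ we have $G_B=D_J$ by \eqref{eq:DJ-explicit}, so $g_{\{1,2\}}\equiv 0$ on $[-1,1]$. For $B=\{1\}$, $g_{\{1\}}(x)=1$ on $(-1,1]$ and $g_{\{1\}}(-1)=0$. For $B=\{2\}$, $g_{\{2\}}(x)=-1$ on $[-1,1)$ and $g_{\{2\}}(1)=0$. Hence, as long as $w_k\in[-1,1]$, the update is $w_{k+1}=w_k-\alpha_k\xi_k$. Here $\xi_k\in\{-1,0,1\}$ is uniform, except that a step pushing out of $[-1,1]$ from an endpoint is replaced by $0$. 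This is a lazy walk reflected (held) at $\pm1$.

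Next we choose blocks. Take block $m\ge1$ of length $L_m$ with constant stepsize $\alpha=2^{-m}$, and let $K_m$ be the starting index. Then $\{\alpha_k\}$ is nonincreasing. If $w_{K_m}\in 2^{-m}\mathbb{Z}\cap[-1,1]$, which holds inductively from $w_0=0$ since $2^{-(m-1)}\mathbb{Z}\subset 2^{-m}\mathbb{Z}$, then every step moves within the lattice $\Lambda_m:=2^{-m}\mathbb{Z}\cap[-1,1]$. Since $\pm1\in\Lambda_m$, the walk never overshoots, and by induction $|w_k|\le1$ for all $k$. The stepsize condition $\alpha_k=o(1/\log k)$ reduces to $2^{-m}\log K_{m+1}\to0$. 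So we need $L_m\le\exp(c_m2^m)$ with $c_m\to0$. A polynomial choice suffices, e.g.\ $L_m=\lceil m^2\,4^{m}\cdot 2^{m}\rceil$ or similar, which gives $\log K_{m+1}=O(m)$ and hence $2^{-m}\cdot O(m)\to0$. This also makes $\sum\alpha_k^2\ge\sum_m L_m4^{-m}=\infty$, consistent with the final remark of the abstract.

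The main step is the cover estimate. The walk on $\Lambda_m$ is the lazy reflected walk on $N=2^{m+1}+1$ sites, with holding probability $1/3$ in the interior and $2/3$ at the ends. It visits all of $\Lambda_m$ once it has visited both $-1$ and $1$, because the walk is nearest-neighbor. Let $T_m$ be the first time the block has hit both endpoints, starting from an arbitrary site. Then $T_m\le \tau_{\text{end}}+\tau_{\text{cross}}$, where each term is a hitting time across at most $N$ sites. The standard gambler's-ruin computation, via the martingale $X_t^2-\tfrac23 t$ on the interior with the boundary holding handled by a reflection argument or by solving $h(x)=1+\mathbb{E}h(\text{next})$ explicitly, gives $\mathbb{E}\,T_m\le C N^2\le C'4^{m}$ uniformly in the start. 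Markov's inequality then yields $\Pp(T_m>L_m)\le C'4^m/L_m\le C'/(m^2 2^m)$, which is summable. By the first Borel--Cantelli lemma, almost surely every block $m\ge m_0(\omega)$ visits all of $\Lambda_m$. I expect this explicit expected-cover-time bound with the boundary holding to be the main obstacle. The cleanest route is to solve the linear recurrence for the expected time to hit the far endpoint explicitly; it is quadratic in the distance, with constant $3/2$ adjusted for reflection.

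Finally, any $x\in[-1,1]$ lies within $2^{-m}$ of some point of $\Lambda_m$ that is visited during block $m$, for all $m\ge m_0$. This gives a subsequence converging to $x$, so $[-1,1]\subseteq\Acc(\{w_k\})$. The reverse inclusion follows from $|w_k|\le1$. Equality with $\crit_J$ comes from \eqref{eq:critinterval}. Since the accumulation set is not a singleton, $\{w_k\}$ does not converge almost surely, while $J\equiv0$ on it.
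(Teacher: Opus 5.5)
Your proposal is correct and follows essentially the same route as the paper. Both arguments use the same selector computation, the same lazy reflected walk on the dyadic lattice with stepsize $2^{-m}$ in block $m$, an $O(N^2)$ endpoint cover-time bound from the explicit hitting-time recurrence (the paper obtains $q_j=\tfrac32[N(N+1)-j(j+1)]$, matching your ``constant $3/2$ adjusted for reflection''), Markov's inequality, the first Borel--Cantelli lemma, and the lattice-approximation argument for $\Acc(\{w_k\})=[-1,1]$. The only difference is in the probability estimate: you apply Markov once over a polynomially longer block ($L_m\approx m^2 8^m$, giving failure probability $O(m^{-2}2^{-m})$), whereas the paper takes $L_m=8mN_m^2$, splits it into $m$ epochs of length $8N_m^2$, and iterates a conditional Markov bound to get $2^{-m}$; both choices work, since the stepsize condition only needs $\log K_{m+1}=o(2^m)$.
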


\section{Proof of Theorem~\ref{thm:main}}\label{sec:proof}

This section gives the proof of Theorem~\ref{thm:main}. It additionally defines the stepsize sequence $\{\alpha_k\}$ and shows that the corresponding iterates $\{w_k\}$ form a scaled random walk when $w_0=0$.

We first define the stepsize sequence and show that the corresponding SAA iterates $\{w_k\}$ constitute a scaled lazy reflected random walk. For block $m\ge1$, define
\begin{equation}\label{eq:blockparameters}
    a_m:=2^{-m},
    \qquad
    N_m:=\frac{2}{a_m}=2^{m+1},
    \qquad
    L_m:=8mN_m^2,
\end{equation}
and let $K_1=0$ and $K_{m+1}=K_m+L_m$. We consider the stepsizes $\{\alpha_k\}$ given by
\begin{equation}\label{eq:steps}
    \alpha_k=a_m
    \quad\text{for }K_m\le k<K_{m+1}.
\end{equation}

\begin{lemma}\label{lem:block}
The sequence $\{\alpha_k\}$ given by \eqref{eq:steps} is positive, nonincreasing, and satisfies $\alpha_k=o(1/\log k)$. Moreover, during block $m$, the iterates $\{w_k\}$ given by \eqref{eq:BP-recursion} and \eqref{eq:selector} remain on the lattice
\[
    \cL_m=\{-1+ja_m:0\le j\le N_m\}\subset[-1,1],
\]
and the rescaled process
\begin{equation}\label{eq:Y}
    Y_t=\frac{w_{K_m+t}+1}{a_m},
    \qquad 0\le t\le L_m,
\end{equation}
with $w_0=0$ is a lazy symmetric random walk on nodes $\{0,\dots,N_m\}$ reflected at the endpoints.
\end{lemma}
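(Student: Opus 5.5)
We verify the three claims in order: the stepsize properties, then the selector values on the lattice, then lattice invariance by induction over blocks.

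\textbf{Stepsizes.} Positivity and monotonicity are immediate, since $a_m=2^{-m}$ is positive and decreasing in $m$, and $\alpha_k$ is constant on each block. For the rate, we bound $K_{m+1}=\sum_{j\le m}8j\,4^{j+1}\le C m 4^{m}$. Hence if $K_m\le k<K_{m+1}$, then $\log k\le \log C+\log m+m\log 4=O(m)$. It follows that $\alpha_k\log k=O(m2^{-m})\to0$, because $m\to\infty$ as $k\to\infty$.

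\textbf{Selector values.} We compute $g_B$ on $[-1,1]$ from table \eqref{eq:component-fields}.
\begin{itemize}
\item For $-1<x<1$, we have $G_{\{1\}}=\{1\}$, $G_{\{2\}}=\{-1\}$ and $G_{\{1,2\}}=\{0\}$, so $g_B\in\{1,-1,0\}$.
\item At $x=-1$, we have $G_{\{1\}}=[0,1]$, giving $g=0$; $G_{\{2\}}=[-2,-1]$, giving $g=-1$; and $G_{\{1,2\}}=[-1,0]$, giving $g=0$.
\item At $x=1$, we have $G_{\{1\}}=[1,2]$, giving $g=1$; $G_{\{2\}}=[-1,0]$, giving $g=0$; and $G_{\{1,2\}}=[0,1]$, giving $g=0$.
\end{itemize}
The update is $w_{k+1}=w_k-\alpha_k g$. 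In the interior, $w$ therefore moves by $-a_m$, $+a_m$ or $0$, each with probability $1/3$. At $-1$, it moves to $-1+a_m$ with probability $1/3$ and stays with probability $2/3$. At $+1$, it moves to $1-a_m$ with probability $1/3$ and stays with probability $2/3$.

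\textbf{Lattice invariance.} Fix $m$ and suppose $w_{K_m}\in\cL_m$. Since $a_m$ divides $2$, the nodes $\pm1$ belong to $\cL_m$, and the rules above keep $w$ on $\cL_m\subset[-1,1]$ throughout the block. In the variable $Y$, the process is a lazy symmetric walk on $\{0,\dots,N_m\}$ with holding probability $1/3$ in the interior, and with reflection and holding $2/3$ at the endpoints. Independence of the $B_k$ makes it a time-homogeneous Markov chain within the block.

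For the base case, $w_0=0=-1+(N_1/2)a_1\in\cL_1$. For the step, $\cL_m\subset\cL_{m+1}$ because $a_m=2a_{m+1}$, so $w_{K_{m+1}}\in\cL_m\subset\cL_{m+1}$, and induction closes.

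The only real obstacle is the boundary bookkeeping. We must check that the minimum-norm projection selects $0$, and not an outward step, at the endpoints for the singleton batches, so that $w$ never leaves $[-1,1]$. The table computation above settles this. Everything else is routine.
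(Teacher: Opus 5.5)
Your proof is correct and follows the paper's argument essentially step for step. You use the same bound $K_{m+1}\le Cm4^m$ to get $\alpha_k\log k=O(m2^{-m})$, the same endpoint selector values ($0,-1,0$ at $x=-1$ and $1,0,0$ at $x=1$), and the same induction via the nesting $\cL_m\subset\cL_{m+1}$; your explicit computation of the aggregated intervals $G_{\{1,2\}}(\pm1)$ just fills in a detail the paper leaves implicit, and the paper writes $\log(k+2)$ to avoid $k=0,1$.
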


\begin{proof}
We first prove that $\alpha_k=o(1/\log k)$. The sequence is positive and nonincreasing because $a_{m+1}=a_m/2$. Since $N_r^2=4^{r+1}$,
\[
    K_{m+1}=\sum_{r=1}^{m}8rN_r^2
    =32\sum_{r=1}^{m}r4^r
    \le C m4^m
\]
for an absolute constant $C$. If $k$ lies in block $m$, then $k<K_{m+1}$ and
\[
    \alpha_k\log(k+2)
    \le 2^{-m}\log(K_{m+1}+2)
    =O(m2^{-m})\longrightarrow0.
\]
Thus $\alpha_k=o(1/\log k)$.

We next prove lattice invariance by induction. Since $a_{m+1}=a_m/2$, every point of $\cL_m$ also belongs to $\cL_{m+1}$. The initial point $w_0=0$ lies in $\cL_1$. Suppose that $w_{K_m}\in\cL_m$. At an interior lattice point, the update rule \eqref{eq:BP-recursion} changes the lattice index by $-1$, $0$, or $1$ as $d_k\in\{-1,0,1\}$ for $w_k\in[-1,1]$. At $x=-1$, the directions corresponding to
$B=\{1\},\{2\},\{1,2\}$ are respectively $0,-1,0$.
Thus the iterate stays at $-1$ with probability $2/3$ and moves one
lattice step inward with probability $1/3$.
Similarly, at $x=1$, the three directions are respectively $1,0,0$,
so the iterate stays at $1$ with probability $2/3$ and moves one
lattice step inward with probability $1/3$. Hence, all iterates in block $m$, including $w_{K_{m+1}}$, remain in $\cL_m\subseteq[-1,1]$. The nesting of the lattices closes the induction at the next block.

Finally, we prove that $\{Y_t\}$ is the claimed random walk. First, note that at each iteration, the three possible mini-batches $\{1\}$, $\{2\}$, and $\{1,2\}$ are equiprobable, and the batch selections are independent across iterations. Therefore, for $1\le j\le N_m-1$, we have
\[
\Pp(Y_{t+1}=j-1\mid Y_t=j)
=\Pp(Y_{t+1}=j\mid Y_t=j)
=\Pp(Y_{t+1}=j+1\mid Y_t=j)=\frac13.
\]
At $j=0$, the probabilities of staying and moving to $1$ are $2/3$ and $1/3$, respectively; at $j=N_m$ the analogous probabilities are $2/3$ and $1/3$ toward $N_m-1$. This is precisely the lazy symmetric random walk reflected at the endpoints.
\end{proof}

\begin{remark}\label{rem:stepsums}
The stepsizes also satisfy
\[
\sum_{k=K_m}^{K_{m+1}-1}\alpha_k=L_ma_m=32m2^m,
\qquad
\sum_{k=K_m}^{K_{m+1}-1}\alpha_k^2=L_ma_m^2=32m.
\]
Consequently, both $\sum_k\alpha_k$ and $\sum_k\alpha_k^2$ diverge. The second divergence is the source of persistent sampling noise and is permitted by \eqref{eq:BP-step}.
\end{remark}

We now show that the expected time for the above random walk on nodes $\{0,\ldots,N\}$ to visit both endpoints is at most $4N^2$.

\begin{lemma}[Endpoint cover time]\label{lem:cover}
Let $\{Y_t\}$ be the lazy symmetric random walk in Lemma~\ref{lem:block} on nodes $\{0,\dots,N\}$. The expected additional time needed to visit both endpoints is at most $4N^2$, uniformly over the starting state and which endpoints have already been visited.
\end{lemma}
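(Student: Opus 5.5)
The plan is to bound the expected hitting time of a single endpoint from an arbitrary starting state. Visiting both endpoints then amounts to at most two successive such hitting problems. Let $h(j)$ denote the expected time for the lazy reflected walk of Lemma~\ref{lem:block}, started at $j$, to hit $N$. I will compute $h$ via first-step analysis and show $\max_j h(j)=h(0)\le 2N^2$ (or a comparable bound). By symmetry $j\mapsto N-j$, the same bound holds for the expected time to hit $0$.

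Given this, the cover estimate follows by cases on which endpoints have already been visited. If both have been visited, the additional time is $0$. If exactly one has been visited, the additional time is the hitting time of the other endpoint, at most $2N^2$. If neither has been visited, we wait until the first endpoint is hit at time $\tau$. By the strong Markov property, the remaining time is then the hitting time of the opposite endpoint started from an endpoint, with expectation at most $2N^2$. The first stage is bounded by $2N^2$ as well, since $\tau$ is at most the hitting time of $N$. Summing gives $4N^2$, uniformly in the starting state.

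The main work is solving the recurrence for $h$. It reads:
\begin{itemize}
\item for $1\le j\le N-1$: $h(j)=1+\tfrac13\bigl(h(j-1)+h(j)+h(j+1)\bigr)$, i.e.\ $h(j+1)-2h(j)+h(j-1)=-3$;
\item at $j=0$: $h(0)=1+\tfrac23 h(0)+\tfrac13 h(1)$, i.e.\ $h(0)-h(1)=3$;
\item $h(N)=0$.
\end{itemize}
Set $\Delta_j=h(j)-h(j+1)$. The interior equations give $\Delta_j=\Delta_{j-1}+3$, so $\Delta_j=3(j+1)$. Then
\[
h(0)=\sum_{j=0}^{N-1}\Delta_j=\frac{3N(N+1)}{2}.
\]
Since every $\Delta_j>0$, $h$ is decreasing and $h(0)$ is the maximum. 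The bound $\tfrac32 N(N+1)\le 2N^2$ holds for $N\ge3$; in the paper $N=N_m\ge4$.

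The only potential obstacle is the constant: two phases each bounded by $\tfrac32N(N+1)$ give $3N(N+1)\le 4N^2$ exactly when $N\ge3$. This is fine for $N_m=2^{m+1}\ge4$, so I would state the restriction $N\ge 3$ explicitly (or note that small $N$ can be checked directly). Finiteness of $h$, which justifies the first-step equations, follows from irreducibility of the finite chain.
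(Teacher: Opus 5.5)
Your proof is correct and follows the paper's argument: the same two-phase decomposition via the strong Markov property, and the same reflected recurrence giving $\tfrac32 N(N+1)$ for the endpoint-to-endpoint phase. The only difference is that you bound the first phase by the one-sided hitting time of $N$ rather than solving the two-sided exit problem (the paper gets $\tfrac32 j(N-j)\le\tfrac38N^2$); this saves one computation but needs $N\ge3$ to stay under $4N^2$, which is harmless since $N_m=2^{m+1}\ge4$.
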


\begin{proof}
Throughout this proof, let $\E_j[X]=\E[X\mid Y_0=j]$ denote the expectation for the reflected chain started from $Y_0=j$
for every integrable random variable $X$ associated with the chain.

First suppose that neither endpoint has yet been visited. Let $H_{\partial}$ be the first hitting time of $\{0,N\}$ and denote $h_j=\E_j[H_{\partial}]$. Conditioning on the first step gives, for $1\le j\le N-1$,
\[
    h_j=1+\frac13h_{j-1}+\frac13h_j+\frac13h_{j+1},
    \qquad h_0=h_N=0,
\]
or, equivalently,
\[
    h_{j+1}-2h_j+h_{j-1}=-3, \quad h_0 = h_N = 0.
\]
Solving the inhomogeneous recurrence gives the unique solution $h_j= 3j(N-j)/2$ and, in particular, we have $\max_j h_j\le3N^2/8$.

If the first endpoint is reached, it remains to hit the opposite one. Let $H_N$ be the first hitting time of $N$ for the chain with $0$ reflecting, and denote $q_j=\E_j[H_N]$. Then, $q_N=0$, while at the reflecting endpoint we have
\[
    q_0=1+\frac23q_0+\frac13q_1
    \quad\text{or, equivalently,}\quad q_0-q_1=3.
\]
Solving the inhomogeneous recurrence gives the unique solution
\begin{equation}\label{eq:hitting-opposite}
    q_j=\frac32\left[N(N+1)-j(j+1)\right]
\end{equation}
and, in particular, we have $\max_jq_j=q_0=3N(N+1)/2\le3N^2$ 
for $N\ge1$. By symmetry, the same estimate holds for hitting $0$ when $N$ is reflecting.

We now combine the two estimates. Let $T_{\mathrm{cov}}$ be the additional time needed to visit both endpoints when neither has yet been visited, and write
\[
    T_{\mathrm{cov}}=H_{\partial}+R,
\]
where $R$ is the time, after $H_{\partial}$, needed to reach the endpoint opposite to $Y_{H_{\partial}}$. Let $\mathcal H_{H_{\partial}}$ denote the information accumulated by the chain up to the stopping time $H_{\partial}$. The strong Markov property says that, conditionally on this information, the post-$H_{\partial}$ process is a fresh copy of the same chain started from the random state $Y_{H_{\partial}}\in\{0,N\}$. Therefore
\[
    \E_j[R\mid\mathcal H_{H_{\partial}}]
    =\mathbf 1_{\{Y_{H_{\partial}}=0\}}\E_0[H_N]
     +\mathbf 1_{\{Y_{H_{\partial}}=N\}}\E_N[H_0]
    \le3N^2.
\]
Using the tower property, we obtain
\[
\begin{aligned}
    \E_j[T_{\mathrm{cov}}]
    &=\E_j[H_{\partial}]
      +\E_j\!\left[\E_j[R\mid\mathcal H_{H_{\partial}}]\right] \le \frac38N^2+3N^2
    <4N^2.
\end{aligned}
\]
If one endpoint has already been visited, only the endpoint-to-endpoint term is needed; if both have been visited, the additional time is zero. The bound is therefore uniform over the augmented state consisting of the current location and the set of endpoints already visited.
\end{proof}

For the remaining arguments, let $\{w_k\}$ denote the iterate sequence
generated by \eqref{eq:BP-recursion}, \eqref{eq:selector}, and the
stepsizes \eqref{eq:steps}. Moreover, let $\cF_0$ be the trivial $\sigma$-field and define
\begin{equation}\label{eq:filtration}
    \cF_k:=\sigma(B_0,\dots,B_{k-1}),
    \qquad k\ge1.
\end{equation}
Then, $w_k$ is $\cF_k$-measurable, while the mini-batches from time $k$ onward remain independent and uniform on $\cB$ conditionally on $\cF_k$.

The next lemma bounds the block failure probability.

\begin{lemma}[Block success probability]\label{lem:block-success}
Let $A_m$ be the event that the iterates $\{w_k\}$ in block $m$ fail to visit both endpoints $-1$ and $1$. Then,
\begin{equation}\label{eq:block-failure}
    \Pp(A_m\mid\cF_{K_m})\le2^{-m}
    \qquad\text{almost surely}.
\end{equation}
Consequently, $\sum_{m\ge1}\Pp(A_m)<\infty$.
\end{lemma}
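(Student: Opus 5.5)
Conditionally on $\cF_{K_m}$, the starting point $w_{K_m}$ is fixed and lies in $\cL_m$ by Lemma~\ref{lem:block}. The batches $B_{K_m},\dots,B_{K_{m+1}-1}$ are independent of $\cF_{K_m}$ and uniform on $\cB$. Hence, given $\cF_{K_m}$, the rescaled process $Y_t$ from \eqref{eq:Y}, for $0\le t\le L_m$, is the lazy reflected random walk on $\{0,\dots,N_m\}$ started at $Y_0=(w_{K_m}+1)/a_m$. The event $A_m$ is exactly the event that this walk fails to visit both $0$ and $N_m$ within the block. I will take the block to mean the iterates $w_k$ with $K_m\le k\le K_{m+1}$, i.e.\ the times $t\in\{0,\dots,L_m\}$; restricting to $k<K_{m+1}$ changes nothing essential.

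Let $T$ be the additional time needed for the walk to visit both endpoints, starting from the state $Y_0$ with no endpoint yet visited, unless $Y_0$ is itself an endpoint. In that case $Y_0$ counts as already visited. Lemma~\ref{lem:cover} gives $\E[T\mid\cF_{K_m}]\le4N_m^2$ almost surely, uniformly in the starting state. Since $A_m\subseteq\{T>L_m\}$, Markov's inequality gives
\[
\Pp(A_m\mid\cF_{K_m})\le\Pp(T\ge L_m\mid\cF_{K_m})\le\frac{4N_m^2}{L_m}=\frac{4N_m^2}{8mN_m^2}=\frac1{2m}.
\]
This does \emph{not} reach $2^{-m}$. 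A single Markov bound on the whole block is too weak, and repairing this is the main step.

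To fix it, I split block $m$ into $m$ consecutive sub-blocks, each of length $8N_m^2$; their total length is exactly $L_m$. Let $\mathcal H_i$ be the $\sigma$-field generated by $\cF_{K_m}$ and the batches in the first $i$ sub-blocks. Consider the augmented state consisting of the current location and the set of endpoints already visited. By the Markov property, conditionally on $\mathcal H_i$ the chain restarts from this augmented state. The uniformity in Lemma~\ref{lem:cover} then gives
\[
\Pp(\text{cover not completed by end of sub-block } i+1\mid\mathcal H_i)\le\frac{4N_m^2}{8N_m^2}=\frac12
\]
on the event that the cover was not completed by the end of sub-block $i$. Iterating this bound through the tower property over the $m$ sub-blocks yields $\Pp(A_m\mid\cF_{K_m})\le2^{-m}$ almost surely. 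The only point needing care is this conditioning chain. It holds because each $\mathcal H_i$ is the stopping-time $\sigma$-field at a deterministic time, so the ordinary Markov property suffices.

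Finally, take expectations: $\Pp(A_m)=\E[\Pp(A_m\mid\cF_{K_m})]\le2^{-m}$. Therefore $\sum_m\Pp(A_m)\le\sum_m2^{-m}=1<\infty$.
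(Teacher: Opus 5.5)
Your proposal is correct and follows essentially the same route as the paper: split block $m$ into $m$ epochs of length $8N_m^2$ and condition on $\cF_{K_m+r\cdot 8N_m^2}$. On each epoch, apply the conditional Markov inequality with the bound of Lemma~\ref{lem:cover}, which is uniform over the augmented state (current position plus endpoints already visited), to get a factor $\tfrac12$. Iterating through the tower property gives $2^{-m}$, and taking expectations gives summability; your preliminary single-Markov bound of $1/(2m)$ is a harmless detour that the paper skips.
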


\begin{proof}
Fix $m$, write $N=N_m$, and divide the block into $m$ epochs of length $\ell:=8N^2$. 
For $r=0,\dots,m$, set
\[
    t_r:=K_m+r\ell,
    \qquad
    \mathcal G_r:=\cF_{t_r},
\]
and let $E_r$ be the event that the endpoint cover is incomplete by
time $t_r$, that is, that the iterates $w_k$ with
$K_m\le k\le t_r$ have not visited both endpoints.
Then $E_0$ occurs surely and $A_m=E_m$.

Fix $r\in\{1,\dots,m\}$. Conditional on $\mathcal G_{r-1}$,
the next epoch is driven by fresh independent uniform mini-batches.
Let $V_{r-1}\subseteq\{0,N\}$ be the set of endpoints visited by the
rescaled process by time $t_{r-1}$. Define $T_r$ to be the additional
number of steps needed to visit both endpoints, starting from the
augmented state consisting of the current position and $V_{r-1}$.
Thus $T_r=0$ if both endpoints have already been visited.

Since the epoch has length $8N^2$,
\[
E_r=E_{r-1}\cap\{T_r>8N^2\}.
\]
By the conditional form of Markov's inequality and
Lemma~\ref{lem:cover}, we have
\[
\begin{aligned}
\Pp(E_r\mid\mathcal G_{r-1})
&=\mathbf 1_{E_{r-1}}
  \Pp(T_r>8N^2\mid\mathcal G_{r-1})\\
&\le
\mathbf 1_{E_{r-1}}
\frac{\E[T_r\mid\mathcal G_{r-1}]}{8N^2}
\le 
\mathbf 1_{E_{r-1}}
\frac{4N^2}{8N^2}
= \frac12\mathbf 1_{E_{r-1}}.
\end{aligned}
\]
Because $\cF_{K_m}\subseteq\mathcal G_{r-1}$, the law of total expectations and the above bound yields
\begin{align*}
    \Pp(E_r\mid\cF_{K_m})
    &=\E\!\left[
       \Pp(E_r\mid\mathcal G_{r-1})
       \,\middle|\,\cF_{K_m}\right] 
       \le\frac12\Pp(E_{r-1}\mid\cF_{K_m}).
\end{align*}
Iterating from $\Pp(E_0\mid\cF_{K_m})=1$ proves \eqref{eq:block-failure}. Taking expectations gives $\Pp(A_m)\le2^{-m}$, and the claimed summability follows.
\end{proof}

The next lemma describes the accumulation set when every sufficiently
late block visits both endpoints.

\begin{lemma}[Accumulation from successful blocks]\label{lem:accumulation}
Suppose that the iterates $\{w_k\}$ for all sufficiently large blocks $m\geq 1$ visit both endpoints. Then, $\Acc(\{w_k\})=[-1,1]$.
\end{lemma}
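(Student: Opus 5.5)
We prove the two inclusions separately. The inclusion $\Acc(\{w_k\})\subseteq[-1,1]$ is immediate from Lemma~\ref{lem:block}. Every iterate lies in some lattice $\cL_m\subset[-1,1]$, so $|w_k|\le1$ for all $k$. Since $[-1,1]$ is closed, any limit of a subsequence $w_{k_j}$ also lies in $[-1,1]$.

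For the reverse inclusion, fix $x\in[-1,1]$ and let $M$ be such that every block $m\ge M$ visits both endpoints. We construct, for each $m\ge M$, an index $k_m$ in block $m$ (that is, $K_m\le k_m<K_{m+1}$) with $|w_{k_m}-x|\le a_m=2^{-m}$.

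The key observation is a discrete intermediate value property. By Lemma~\ref{lem:block}, within block $m$ the rescaled process $Y_t$ moves by at most one node per step. Hence, if the block visits node $0$ at some time and node $N_m$ at another time, then the path between these two times passes through every node $j\in\{0,\dots,N_m\}$. In particular, every point of $\cL_m$ is visited during block $m$.

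It remains to choose $j$ with $|-1+ja_m-x|\le a_m/2$, which is possible because $\cL_m$ has mesh $a_m$ and spans $[-1,1]$. We take $k_m$ to be a time in block $m$ at which $Y=j$. Since block indices are disjoint and increasing, $k_m\uparrow\infty$ and $w_{k_m}\to x$. Hence $x\in\Acc(\{w_k\})$.

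The only point needing care is making sure the visiting times fall within the block, so that the stepsize, and hence the one-node increment, is constant along the path between the two endpoint visits. This holds by the hypothesis that the visits to both endpoints occur during block $m$ itself. Nothing else is an obstacle: the argument is deterministic given the hypothesis, and the almost-sure statement of Theorem~\ref{thm:main} then follows by combining this lemma with Lemma~\ref{lem:block-success} and the first Borel--Cantelli lemma.
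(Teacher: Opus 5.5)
Your proof is correct and is essentially the paper's argument: increments of at most one node force a successful block to visit every point of $\cL_m$, a lattice point within $a_m/2$ of $x$ then gives $w_{k_m}\to x$, and the reverse inclusion follows from $w_k\in[-1,1]$ and closedness. The only cosmetic difference is indexing: the paper lets block $m$ include the index $K_{m+1}$ and therefore passes to a strictly increasing subsequence, whereas your half-open convention makes $k_m$ strictly increasing automatically.
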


\begin{proof}
In a successful block $m\geq1$, the rescaled process $\{Y_t\}$ in Lemma~\ref{lem:block} is integer-valued, has increments of magnitude at most one, and visits both $0$ and $N_m$. It must therefore visit every state in $\{0,\dots,N_m\}$, so the original process visits every point of $\cL_m$.

Fix $x\in[-1,1]$. For each sufficiently large successful block, choose $z_m\in\cL_m$ with
\begin{equation}\label{eq:lattice-approximation}
    |z_m-x|\le\frac{a_m}{2}.
\end{equation}
In each selected block choose an index $k_m\in[K_m,K_{m+1}]$ for which $w_{k_m}=z_m$. Since $k_m\ge K_m$ and $K_m\to\infty$, we have $k_m\to\infty$; after passing to a strictly increasing subsequence if necessary, \eqref{eq:lattice-approximation} and $a_m\to0$ give
\[
    w_{k_m}=z_m\longrightarrow x.
\]
Thus every $x\in[-1,1]$ is an accumulation point on the same trajectory. Conversely, Lemma~\ref{lem:block} gives $w_k\in[-1,1]$ for every $k$, and the interval is closed. Hence, every accumulation point belongs to $[-1,1]$.
\end{proof}

We are now ready to give the proof of Theorem~\ref{thm:main}.

\begin{proof}[Proof of Theorem~\ref{thm:main}]
The functions $f_1,f_2$ are convex and semialgebraic, the fields $D_i=\partial_C f_i$ are definable conservative fields, and the selector \eqref{eq:selector} is measurable and admissible. Lemma~\ref{lem:block} shows that the stepsizes are positive, nonincreasing, and satisfy \eqref{eq:BP-step}, and that $w_k\in[-1,1]$ pathwise.

By Lemma~\ref{lem:block-success}, we have $\sum_{m\ge1}\Pp(A_m)<\infty$, and the first Borel-Cantelli lemma therefore implies that, almost surely, only finitely many blocks fail to visit both endpoints. Consequently, on the probability-one event that there is a block $m_0$ such that all blocks $m\geq m_0$ contain iterates that visit the endpoints $-1$ and $1$, Lemma~\ref{lem:accumulation} gives $\Acc(\{w_k\})=[-1,1]$ and 
that the sequence does not converge. Finally, \eqref{eq:critinterval} shows that $[-1,1]=\crit_J$ and that $J$ vanishes on this set. 
\end{proof}

\section{Concluding remarks}\label{sec:discussion}

The full-batch minimum-norm direction vanishes throughout the flat critical interval, whereas the singleton-batch directions are nonzero in its interior. Definability controls the geometry of the plateau without suppressing tangential sampling noise. The boundary subgradients reflect the process and keep it bounded.

This paper does not claim the first nonconvergent subgradient dynamics under geometric regularity; the examples in \cite{DaniilidisDrusvyatskiy2020,RiosZertuche2022} are substantially broader in that direction. The point here is narrower: the conjecture fails inside the exact finite-sum mini-batch model in which it was posed, already for two convex semialgebraic summands and a explicit minimum-norm selector.

As recorded in Remark~\ref{rem:stepsums}, the construction has $\sum_k\alpha_k^2=\infty$, which is allowed by \eqref{eq:BP-step}. It therefore does not rule out stronger convergence results under square-summable steps, noise that vanishes on the critical set, or assumptions forcing the relevant critical component to be a singleton.
\section*{Statements and Declarations}

This manuscript was prepared with assistance from ChatGPT 5.6 (Sol) and Gemini 3.1 Pro (DeepThink) for brainstorming, language editing, and LaTeX assistance. The author independently verified all mathematical arguments, references, and claims and takes full responsibility for the manuscript.

\bibliographystyle{plain}
\bibliography{references}

\end{document}